\newtheorem{theorem}{Theorem}
\newtheorem{proposition}[theorem]{Proposition}
\begin{document}

\title{Minimum lattice length and ropelength of knots}
\author[K. Hong]{Kyungpyo Hong}
\address{Department of Mathematics, Korea University, Anam-dong, Sungbuk-ku, Seoul 136-701, Korea}
\email{cguyhbjm@korea.ac.kr}
\author[H. Kim]{Hyoungjun Kim}
\address{Department of Mathematics, Korea University, Anam-dong, Sungbuk-ku, Seoul 136-701, Korea}
\email{kimhjun@korea.ac.kr}
\author[S. No]{Sungjong No}
\address{Department of Statistics, Ewha Womans University, 52, Ewhayeodae-gil, Seodaemun-gu, Seoul 120-750, Korea}
\email{sungjongno84@gmail.com}
\author[S. Oh]{Seungsang Oh}
\address{Department of Mathematics, Korea University, Anam-dong, Sungbuk-ku, Seoul 136-701, Korea}
\email{seungsang@korea.ac.kr}

\thanks{2010 Mathematics Subject Classification: 57M25, 57M27}
\thanks{This research was supported by Basic Science Research Program through
the National Research Foundation of Korea(NRF) funded by the Ministry of Science,
ICT \& Future Planning(MSIP) (No.~2011-0021795).}
\thanks{This work was supported by the BK21 Plus Project through the National Research Foundation 
of Korea (NRF) funded by the Korean Ministry of Education (22A20130011003).}

\begin{abstract}
Let $\mbox{Len}(K)$ be the minimum length of a knot on the cubic lattice 
(namely the minimum length necessary to construct the knot in the cubic lattice).
This paper provides upper bounds for $\mbox{Len}(K)$ of a nontrivial knot $K$
in terms of its crossing number $c(K)$ as follows:

\vspace{2mm}
\hspace{5mm} $\mbox{Len}(K) \leq \min \left\{ \frac{3}{4}c(K)^2 + 5c(K) + \frac{17}{4}, \,
\frac{5}{8}c(K)^2 + \frac{15}{2}c(K) + \frac{71}{8} \right\}.$
\vspace{2mm}

The ropelength of a knot is the quotient of its length by its thickness,
the radius of the largest embedded normal tube around the knot.
We also provide upper bounds for the minimum ropelength $\mbox{Rop}(K)$
which is close to twice $\mbox{Len}(K)$:

\vspace{2mm}
\hspace{5mm} $\mbox{Rop}(K) \leq \min \left\{ \hspace{-1mm}
\begin{tabular}{l}  
$1.5 c(K)^2  + 9.15 c(K) + 6.79$, \\  
$1.25 c(K)^2  + 14.58 c(K) + 16.90$ 
\end{tabular}
\hspace{-1mm} \right\}.$

\end{abstract}

\maketitle

\section{Introduction} \label{sec:intro}

A knot can be embedded in many different ways in $3$-space, smooth or piecewise linear.
Polygonal knots are those which consist of finite line segments, called {\em sticks\/},
attached end-to-end.
A {\em lattice knot} is a polygonal knot in the cubic lattice
$\mathbb{Z}^3=(\mathbb{R} \times \mathbb{Z} \times
\mathbb{Z}) \cup (\mathbb{Z} \times \mathbb{R} \times \mathbb{Z})
\cup (\mathbb{Z} \times \mathbb{Z} \times \mathbb{R})$.
For further studies on lattice knots the readers are referred to
\cite{ACCJSZ, D1, HNO1, HO1, HO2}.

A quantity that we may naturally be interested on lattice knots is
the minimum length necessary to realize a knot as a lattice knot.
An {\em edge\/} is a line segment of unit length joining two nearby lattice points in $\mathbb{Z}^3$.
Obviously a stick with length $n$ consists of $n$ edges.
The minimum number of edges necessary to realize a knot $K$ as a lattice knot
is called the {\em minimum lattice length\/}, denoted by $\mbox{Len}(K)$.

Diao \cite{D1} introduced this term (he used ``minimal edge number" instead),
and proved that the minimum lattice length of the trefoil knot $3_1$ is 24
and all the other nontrivial knots need more than 24 edges.
Ishihara and Shimokawa (see \cite{SIADSV}) proved that the minimum lattice length of $4_1$ and $5_1$
are $30$ and $34$, respectively.
Lattice knot presentations with minimum lattice length of the knots $3_1$, $4_1$,
and $5_1$ are depicted in Figure \ref{fig1}.
Also many numerical estimations of the minimum lattice length for various knots
are addressed in \cite{HNRAV, JaP, SIADSV}.

\begin{figure}[h]
\includegraphics[scale=1]{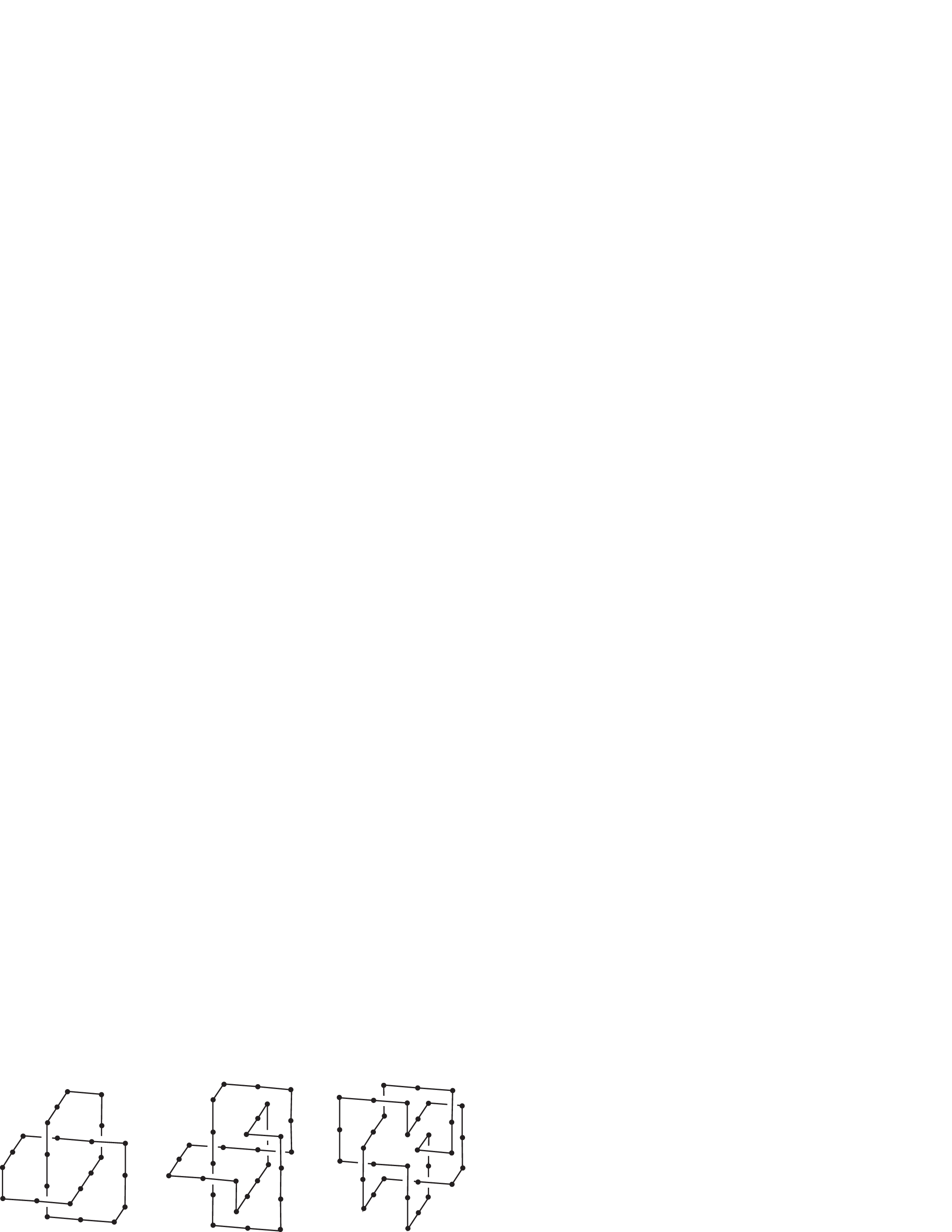}
\caption{Examples of $3_1$, $4_1$ and $5_1$ with minimum lattice length}
\label{fig1}
\end{figure}

In the paper we are interested in finding upper bounds for the minimum lattice length of
a given nontrivial knot $K$ in terms of the minimum crossing number $c(K)$.
Diao {\em et al\/} \cite{DEY} established an O$(c(K)^{\frac{3}{2}})$ upper bound
for the minimum lattice length as follows:

\vspace{2mm}
\hspace{5mm} $\mbox{Len}(K) \leq 136 c(K)^{\frac{3}{2}} + 84 c(K) + 22 c(K)^{\frac{1}{2}} + 11.$
\vspace{2mm}

Recently the authors \cite{HNO2} obtained another inequality for nontrivial knots
except the trefoil knot $3_1$:

\vspace{2mm}
\hspace{5mm} $\mbox{Len}(K) \leq \frac{3}{2}c(K)^2 + 2c(K) + \frac{1}{2}.$
\vspace{2mm}

In this paper we improve this upper bound.

\begin{theorem} \label{thm:Len}
Let $K$ be any nontrivial knot.

\vspace{2mm}
\hspace{5mm} ${\rm Len}(K) \leq \min \left\{ \frac{3}{4}c(K)^2 + 5c(K) + \frac{17}{4}, \,
\frac{5}{8}c(K)^2 + \frac{15}{2}c(K) + \frac{71}{8} \right\}.$
\vspace{2mm}

Moreover if $K$ is a non-alternating prime knot,

\vspace{2mm}
\hspace{5mm} ${\rm Len}(K) \leq \min \left\{ \frac{3}{4}c(K)^2  + 2c(K) - \frac{11}{4}, \,
\frac{5}{8}c(K)^2  + 5c(K) - \frac{29}{8} \right\}.$
\end{theorem}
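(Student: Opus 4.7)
The plan is to realize $K$ as a lattice polygon starting from an efficient planar presentation of $K$ and then carefully count the number of edges. I would begin with an arc presentation of $K$, whose arc index $\alpha(K)$ is bounded above by $c(K)+2$ for every knot (Bae--Park), with the sharper bound $\alpha(K)\le c(K)$ holding for non-alternating prime knots. An arc presentation with $n=\alpha(K)$ consists of $n$ horizontal arcs at distinct $y$-levels and $n$ vertical arcs at distinct $x$-positions, meeting at $2n$ shared lattice endpoints; the over/under information at each crossing is encoded by the $z$-coordinates of the two planes carrying the horizontal and vertical arcs.

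First, I would lift this planar data to the cubic lattice by placing the horizontal arcs in the plane $z=0$ and the vertical arcs in the plane $z=1$, then inserting a unit vertical edge at each of the $2n$ endpoint pairs to close up the polygon. This realizes $K$ as a lattice knot whose total edge count has the form
\[
\mathrm{Len}(K)\;\le\;\sum_{i=1}^{n}|h_i|\;+\;\sum_{j=1}^{n}|v_j|\;+\;(\text{linear overhead}),
\]
where $|h_i|$ and $|v_j|$ denote the lattice lengths of the individual arcs.

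The crucial step is to reorder the $n$ horizontal arcs along the $x$-axis (and the $n$ vertical arcs along the $y$-axis) so as to make the two sums on the right-hand side as small as possible. The two inequalities in the theorem correspond to two distinct reorderings. For the $\tfrac{3}{4}c(K)^{2}$ bound I would arrange the arcs so that roughly half of them have lattice length close to $1$ while the remaining ones span proportional distances, yielding a total of approximately $\tfrac{3}{8}n^{2}$ per axis and hence $\tfrac{3}{4}n^{2}$ overall. For the $\tfrac{5}{8}c(K)^{2}$ bound a more intricate interleaving is used, trading higher linear overhead for a smaller quadratic coefficient; concretely, certain pairs of arcs share a common row or column and are kept disjoint by extra unit edges inserted at different $z$-levels. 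Substituting $n\le c(K)+2$, and $n\le c(K)$ in the non-alternating prime case, and simplifying then gives all four inequalities.

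The main obstacle, and the one on which I expect the real work to lie, is establishing that an arc presentation admitting the required reordering actually exists without increasing the arc index. This reduces to producing a combinatorial normal form for minimal arc presentations of $K$ (for instance coming from a reduced alternating diagram or from a braid/plat closure) in which many arcs are forced to be short, and controlling the extra edges needed to realize the resulting layout in the cubic lattice. Once such a normal form is in hand, the remaining estimate is a direct summation, and the non-alternating prime improvement follows immediately from the sharper Bae--Park bound on $\alpha(K)$.
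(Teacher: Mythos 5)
Your setup (arc presentation/grid diagram, lift to two adjacent $z$-levels joined by $2n$ vertical edges, then invoke Bae--Park and Jin--Park to pass from $n=\alpha(K)$ to $c(K)+2$, resp.\ $c(K)$) matches the paper's Step 1 and Step 4, and the naive count at that stage gives roughly $n^2+2n$ edges. The gap is in the middle: everything that pushes the quadratic coefficient from $1$ down to $\tfrac34$ and then $\tfrac58$ is carried, in your writeup, by a ``reordering'' of the horizontal and vertical arcs whose existence you explicitly defer (``the main obstacle, and the one on which I expect the real work to lie''). That deferred step is not a technicality. Permuting the rows or columns of a grid diagram is not in general a knot-type-preserving operation --- only exchange moves between non-interleaved arcs are --- so there is no reason a minimal arc presentation admits a normal form in which half the arcs have lattice length close to $1$; for the presentations realizing $\alpha(K)=c(K)+2$ the arcs are typically all long. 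As written, the proposal establishes only $\mathrm{Len}(K)\le n^2+2n\le c(K)^2+6c(K)+8$, which is weaker than the stated theorem.

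The paper's mechanism is different and sidesteps the combinatorial problem entirely: it keeps the grid diagram fixed and \emph{folds the lattice embedding in space}. One folds the Step 1 presentation in half along a middle $x$-level, sending roughly half of the $y$-sticks to a new $z$-level; this is an ambient isotopy, so it works for every grid diagram, and after deleting the $x$-edges that become doubly covered the $x$-edge count drops from about $g^2/2$ to about $g^2/4$, giving $\tfrac34 g^2+2g-\tfrac{11}{4}$. A second, transverse fold halves part of the $y$-edge count at the cost of $O(g)$ extra edges needed to reconnect the $y$-sticks broken by the fold, giving $\tfrac58 g^2+5g-\tfrac{29}{8}$. Substituting the arc-index bounds then yields all four inequalities. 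If you want to repair your argument, replacing the unproved reordering claim with such a folding isotopy is the missing idea.
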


An essential question in physical knot theory concerns the minimum
amount of rope (of unit thickness) needed to tie a given knot.
We measure the {\em ropelength\/} of a knot $K$ as the quotient of its length by its thickness
where thickness is the radius of the largest embedded normal tube around the knot.
We define the minimum ropelength $\mbox{Rop}(K)$ to be the minimum ropelength of all embeddings of $K$.
Many results about finding lower bounds for the ropelength of $K$ as a function of
crossing number can be found in \cite{BS, CKS, D2, DE}.
In this paper we are interested in a converse problem:
finding upper bounds for the minimum ropelength of $K$.
Indeed, the minimum ropelength of a knot is less than or equal to twice the minimum lattice length of the knot.
In the same paper, Diao {\em et al\/} also established an O$(c(K)^{\frac{3}{2}})$ upper bound
for the minimum ropelength which is twice the upper bound for the minimum lattice length they found:

\vspace{2mm}
\hspace{5mm} $\mbox{Rop}(K) \leq 272 c(K)^{\frac{3}{2}} + 168 c(K) + 44 c(K)^{\frac{1}{2}} + 22.$
\vspace{2mm}

Also Cantarella {\em et al\/} \cite{CFM} obtained another inequality:

\vspace{2mm}
\hspace{5mm} $\mbox{Rop}(K) \leq 1.64 c(K)^2 + 7.69 c(K) + 6.74.$
\vspace{2mm}

We obtained an improved inequality from Theorem \ref{thm:Len}.

\begin{theorem} \label{thm:Rop}
Let $K$ be any nontrivial knot.

\vspace{2mm}
\hspace{5mm} ${\rm Rop}(K) \leq \min \left\{  \hspace{-1mm}
\begin{tabular}{l}  
$\frac{3}{2}c(K)^2  + (\pi + 6) c(K) + 2 \pi + \frac{1}{2}$, \\  
$\frac{5}{4}c(K)^2  + (\frac{\pi}{2} + 13) c(K) + \pi + \frac{55}{4}$ 
\end{tabular}
\hspace{-1mm} \right\}.$
\vspace{2mm}

Moreover if $K$ is a non-alternating prime knot,

\vspace{2mm}
\hspace{5mm} ${\rm Rop}(K) \leq \min \left\{ \hspace{-1mm}
\begin{tabular}{l}  
$\frac{3}{2}c(K)^2  + \pi c(K) - \frac{11}{2}$, \\  
$\frac{5}{4}c(K)^2  + (\frac{\pi}{2} + 8) c(K) - \frac{29}{4}$ 
\end{tabular}
\hspace{-1mm} \right\}.$
\end{theorem}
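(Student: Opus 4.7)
The plan is to derive Theorem~\ref{thm:Rop} by smoothing the lattice knot presentations produced in the proof of Theorem~\ref{thm:Len} and bounding the ropelength of the resulting $C^{1,1}$ curves. At each of the $s$ right-angle corners of a lattice knot of length $L$, I replace the two incident half-edges of length $\frac{1}{2}$ by the quarter-circle of radius $\frac{1}{2}$ tangent to both sticks. This local replacement is confined to a single unit cube of the lattice, preserves the knot type, and trades straight length $1$ for arc length $\frac{\pi}{4}$, so the smoothed knot has arclength
\[
    L' \;=\; L \;-\; s\Bigl(1 - \tfrac{\pi}{4}\Bigr) \;=\; L \;-\; \tfrac{(4-\pi)s}{4}.
\]

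Next I verify that the smoothed knot has thickness exactly $\frac{1}{2}$. Its curvature is $0$ on the straight segments and $2$ on the arcs, so the reach is $\frac{1}{2}$. For the doubly critical self-distance, the fact that non-adjacent edges of any lattice knot sit at Euclidean distance $\geq 1$ combines with the locality of the smoothing --- each corner modification stays inside the two unit cubes incident to the corner --- to ensure that the $\frac{1}{2}$-neighborhoods of non-adjacent smoothed pieces remain disjoint. Hence the normal tube of radius $\frac{1}{2}$ embeds and
\[
    \mathrm{Rop}(K) \;\leq\; \frac{L'}{1/2} \;=\; 2L \;-\; \tfrac{(4-\pi)s}{2}.
\]

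Finally, I plug into this inequality the pairs $(L,s)$ produced by the four lattice realizations of Theorem~\ref{thm:Len}. Reading the stick counts $s$ off those constructions --- the values must match the pattern $s = 2c(K)+4$ and $s = c(K)+2$ for the two general constructions and $s = 2c(K)$ and $s = c(K)$ for the two non-alternating-prime refinements in order to reproduce the stated coefficients --- and using the length estimates $L$ already proved, reduces Theorem~\ref{thm:Rop} to a line of algebra. The main obstacle is the thickness verification when a construction happens to contain two consecutive sticks of length $1$, since the two radius-$\frac{1}{2}$ arcs at the shared corners then meet tangentially at the midpoint of the common edge. The resulting $C^1$ curve still has curvature at most $2$, but one must confirm by a routine geometric check in the incident unit cubes that non-adjacent arcs and edges remain at distance $\geq 1$. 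All remaining work is bookkeeping.
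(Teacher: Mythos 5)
Your proposal follows the paper's proof essentially verbatim: the paper likewise smooths every corner into a quarter circle (after a factor-$2$ dilation, which is equivalent to your radius-$\frac{1}{2}$ arcs on the unit lattice) to get $\mathrm{Rop}(K) \leq 2L + s\left(\frac{\pi}{2}-2\right)$, plugs in the Step~2 and Step~3 lattice presentations with corner counts of at least $2g$ and $g$ respectively, and then applies Proposition~\ref{prop:grid} ($g \leq c(K)+2$ in general, $g \leq c(K)$ for non-alternating prime knots). The one quantitative item you leave open --- actually verifying that the two folded constructions have at least $2g$ and $g$ corners rather than reverse-engineering those counts from the stated coefficients --- is precisely the input the paper supplies at this stage, though it too asserts these counts with little more justification than you give.
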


Note that the first inequality in Theorem \ref{thm:Rop} guarantees the following inequality
which slightly improves the Cantarella's bound:

\vspace{2mm}
\hspace{5mm} $\mbox{Rop}(K) \leq \min \left\{ \hspace{-1mm}
\begin{tabular}{l}  
$1.5 c(K)^2  + 9.15 c(K) + 6.79$, \\  
$1.25 c(K)^2  + 14.58 c(K) + 16.90$ 
\end{tabular}
\hspace{-1mm} \right\}.$

\section{Grid diagrams and arc index} \label{sec:grid}

A {\em grid diagram\/} of a knot $K$ is a knot diagram of vertical strands and
the same number of horizontal strands with the properties that
at every crossing the vertical strand crosses over the horizontal strand and
no two horizontal strands are co-linear and no two vertical strands are co-linear.
It is known that every knot admits a grid diagram \cite{C}.
The minimum number of vertical strands in all grid diagrams of $K$
is called the {\em grid index\/} of K, denoted by $g(K)$.

An {\em arc presentation\/} of a knot $K$ is an embedding of $K$
in finitely many pages of an open-book decomposition
so that each of the pages meets $K$ in a single simple arc.
Note that this open-book decomposition has open half-planes as pages
and the standard $z$-axis as the binding axis.
And the {\em arc index\/} $\alpha(K)$ is the minimum number of pages among all
possible arc presentations of $K$.

Since a grid diagrams is a way of depicting an arc presentation \cite{C},
the arc index equals the grid index, i.e. $\alpha(K)=g(K)$.
Figure \ref{fig2} presents an arc presentation, a grid diagram, and how they are related
in an example of the trefoil knot.

\begin{figure} [h]
\includegraphics[scale=0.8]{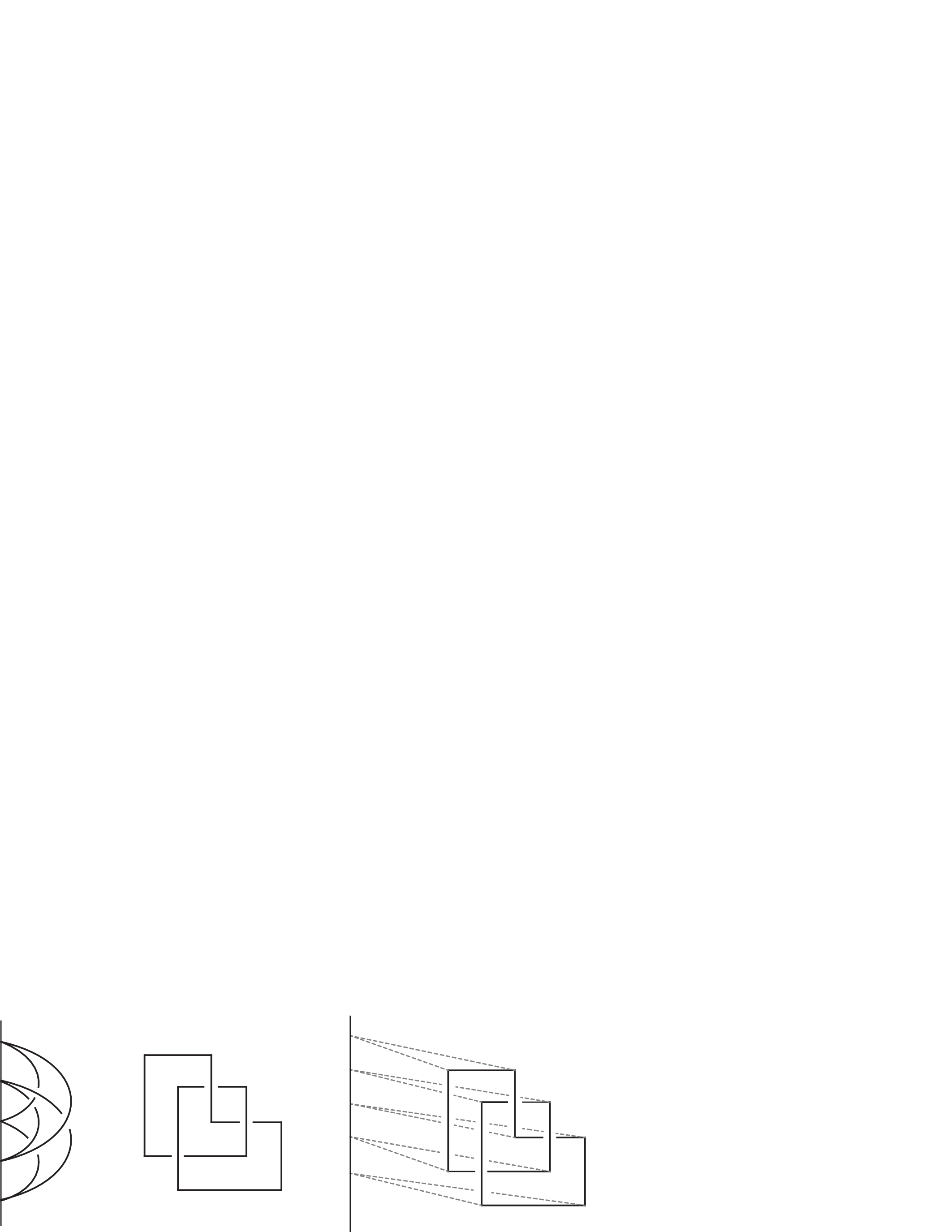}
\caption{An arc presentation and a grid diagram}
\label{fig2}
\end{figure}

In this paper we first find upper bounds for the minimum lattice length and
the minimum ropelength of knots in terms of the grid index.
The following proposition has a key role to convert these bounds to
upper bounds in terms of the crossing number.

\begin{proposition} \label{prop:grid}
Let $K$ be any nontrivial knot. Then $g(K) \leq c(K)+2$.
Moreover if $K$ is a non-alternating prime knot, then $g(K) \leq c(K)$.
\end{proposition}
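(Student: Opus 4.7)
The plan is to bound the arc index $\alpha(K)$, since the section has already identified $\alpha(K)=g(K)$. The task then splits into two cases: the prime case (where known results give the bound directly) and the composite case (which requires an additional argument).

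For a \emph{prime} nontrivial knot, the inequality $\alpha(K)\le c(K)+2$ is the theorem of Bae and Park. Their proof begins with a minimum crossing projection of $K$, modifies it locally near each crossing so that each horizontal level line meets the diagram in at most one horizontal arc (and similarly for vertical level lines), and then reads off an arc presentation whose page count equals the resulting grid width, which turns out to be $c(K)+2$. For a non-alternating prime knot, a further saving of two pages at a pair of consecutive same-sign crossings refines this to $\alpha(K)\le c(K)$; this refinement is also available in the literature on arc index. I would simply cite these two facts and use them as black boxes for the prime case.

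For a \emph{composite} nontrivial knot $K=K_1\#\cdots\#K_n$, I would apply the connect-sum formula $\alpha(K_1\#K_2)=\alpha(K_1)+\alpha(K_2)-2$ iteratively and combine it with the prime bound to obtain $\alpha(K)\le\sum_i c(K_i)+2$. The sensitive step is then the passage $\sum_i c(K_i)\le c(K)$, which is the crossing-number additivity conjecture and is open in general. This is precisely the main obstacle, and I would bypass it by examining the Bae--Park construction to check that it really uses only the combinatorics of a connected minimum crossing diagram with no essential appeal to primeness; applied directly to a minimum crossing diagram of $K$ itself, it should then output an arc presentation on $c(K)+2$ pages without requiring a prime decomposition. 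A fallback, if that pathway fails, is to invoke the known additivity of crossing number for connect sums whose summands are alternating or torus knots, which covers every case where the non-alternating refinement $\alpha(K)\le c(K)$ is not already doing the work.
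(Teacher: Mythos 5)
Your proposal lands on the same citations the paper uses, but it manufactures a difficulty that is not actually there. The paper's proof is a direct appeal to Bae and Park (Corollary 4 and Theorem 9 of their paper) for $\alpha(K)\le c(K)+2$, to Jin and Park for the refinement $\alpha(K)\le c(K)$ in the non-alternating prime case, and to the equality $\alpha(K)=g(K)$ --- exactly the ingredients you name. The point you miss is that the Bae--Park upper bound is stated and proved for arbitrary nonsplit links, not only prime ones: their construction starts from any connected diagram of the link and produces an arc presentation with $c+2$ pages, so every nontrivial knot (composite or not) is covered with no appeal to a prime decomposition. Consequently your entire composite-case discussion --- the connect-sum formula $\alpha(K_1\# K_2)=\alpha(K_1)+\alpha(K_2)-2$, the (open) additivity of crossing number, and the fallback to alternating or torus summands --- is a detour; the fallback in particular would not cover all composite knots and should be discarded. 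Your proposed ``bypass,'' namely checking that the Bae--Park construction uses only a connected minimum crossing diagram, is not a speculative repair but is literally the content of the cited theorem, so your main pathway does go through and yields the same proof as the paper. One small correction of attribution: Bae and Park themselves only obtained $\alpha(K)\le c(K)+1$ for non-alternating prime knots; the further improvement to $\alpha(K)\le c(K)$ is due to Jin and Park, and since the second assertion of the proposition is restricted to prime knots, primeness is available there and no extra argument is needed.
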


\begin{proof}
Bae and Park \cite{BP} established an upper bound on arc index in terms of the crossing number.
In their paper, Corollary $4$ and Theorem $9$ provide that $\alpha(K) \leq c(K)+2$,
and moreover $\alpha(K) \leq c(K)+1$ if $K$ is a non-alternating prime knot.
Later Jin and Park \cite{JiP} improved the second part of Bae and Park's theorem.
They proved $a(K) \leq c(K)$ for a non-alternating prime knot $K$.
The equality $\alpha(K)=g(K)$ guarantees the proposition.
\end{proof}

A stick in $\mathbb{Z}^3$ parallel to the $x$-axis is called an {\em $x$-stick\/} and
an edge parallel to the $x$-axis is called an {\em $x$-edge\/}.
The plane with the equation $x=i$ for some integer $i$ is called an {\em $x$-level $i$}.
The related notations concerning the $y$ and $z$-coordinates
will be defined in the same manner as the $x$-coordinate.
Note that each $y$-stick or $z$-stick lies on some $x$-level.

\section{Upper bounds for the minimum lattice length} \label{sec:Len}

In this section we prove Theorem \ref{thm:Len}
by actually constructing a lattice knot and counting the number of its edges.
Let $K$ be a nontrivial knot with the grid index $g$.
The proof follows four steps:
settling a grid diagram of $K$ into $\mathbb{Z}^3$,
folding it twice horizontally and vertically to reduce the number of edges,
and apply Proposition \ref{prop:grid}.

\vspace{3mm}
\noindent {\bf Step 1.}
{\em Natural settlement of a grid diagram of $K$ into $\mathbb{Z}^3$.\/}
\vspace{2mm}

We begin by considering a grid diagram of $K$
which consists of $g$ horizontal strands and $g$ vertical strands.
We realize this grid diagram into the cubic lattice $\mathbb{Z}^3$ as follows.
We regard these $g$ horizontal strands as $x$-sticks lying on $z$-level 1
and $y$-levels $1, 2, \cdots , g$ in the same order as they appeared in the original grid diagram.
Similarly we regard the other $g$ vertical strands as $y$-sticks lying on $z$-level 2
and $x$-levels $1, 2, \cdots , g$.
Now connect each pair of an $x$-stick and a $y$-stick, which came from adjacent strands
in the grid diagram, by a $z$-edge whose boundary lies on $z$-levels 1 and 2.
We finally get a lattice presentation of $K$ as shown in the leftmost of Figure \ref{fig3}.

Now we count the number of edges.
By the definition of the grid diagram,
there are exactly two $x$-edges each between $x$-levels 1 and 2, and between $x$-levels $g-1$ and $g$, 
four $x$-edges each between $x$-levels 2 and 3, and between $x$-levels $g-2$ and $g-1$, and so on.
Every time we pass over each $x$-level, we add at most two $x$-edges
until we reach the middle level
because the number of $x$-edges increases by two usually, but sometimes is unchanged or decreases by two.
Thus the maximum number of $x$-edges is
$2 \sum^{\frac{g-1}{2}}_{n=1} 2 n = \frac{g^2-1}{2}$ if $g$ is odd, or
$2 \sum^{\frac{g}{2}-1}_{n=1} 2 n + 2 (\frac{g}{2})= \frac{g^2}{2}$ if $g$ is even.
We count the number of $y$-edges in the same manner.
Obviously the number of $z$-edges is equal to $2g$.
Therefore the maximum number of edges of this lattice presentation is
$\frac{g^2-1}{2} + \frac{g^2-1}{2} + 2g = g^2  + 2g - 1$ if $g$ is odd, or
$\frac{g^2}{2} + \frac{g^2}{2} + 2g = g^2  + 2g$ if $g$ is even.
So we have the following inequality:

\vspace{2mm}
\hspace{5mm} $\mbox{Len}(K) \leq g^2 + 2g.$

\vspace{3mm}
\noindent {\bf Step 2.}
{\em Horizontal folding to reduce one fourth of the square growth.\/}
\vspace{2mm}

We would like to fold this lattice presentation structure along the line $z=1$, $x=\frac{g+1}{2}$ if $g$ is odd, 
and $z=1$, $x=\frac{g}{2} +1$ if $g$ is even  as shown in Figure \ref{fig3}.
During this folding argument, any stick does not pass another but
only some $x$-sticks can be overlapped.
To get a proper lattice presentation, remove such all overlapped $x$-edges
which are expressed by dotted line segments in the last figure.
Note that almost half of $y$-sticks which are on the right side in the leftmost figure moved into $z$-level 0
and all $x$-sticks stayed on $z$-level 1.
This movement does not change the knot type of $K$.

\begin{figure} [h]
\includegraphics[scale=1]{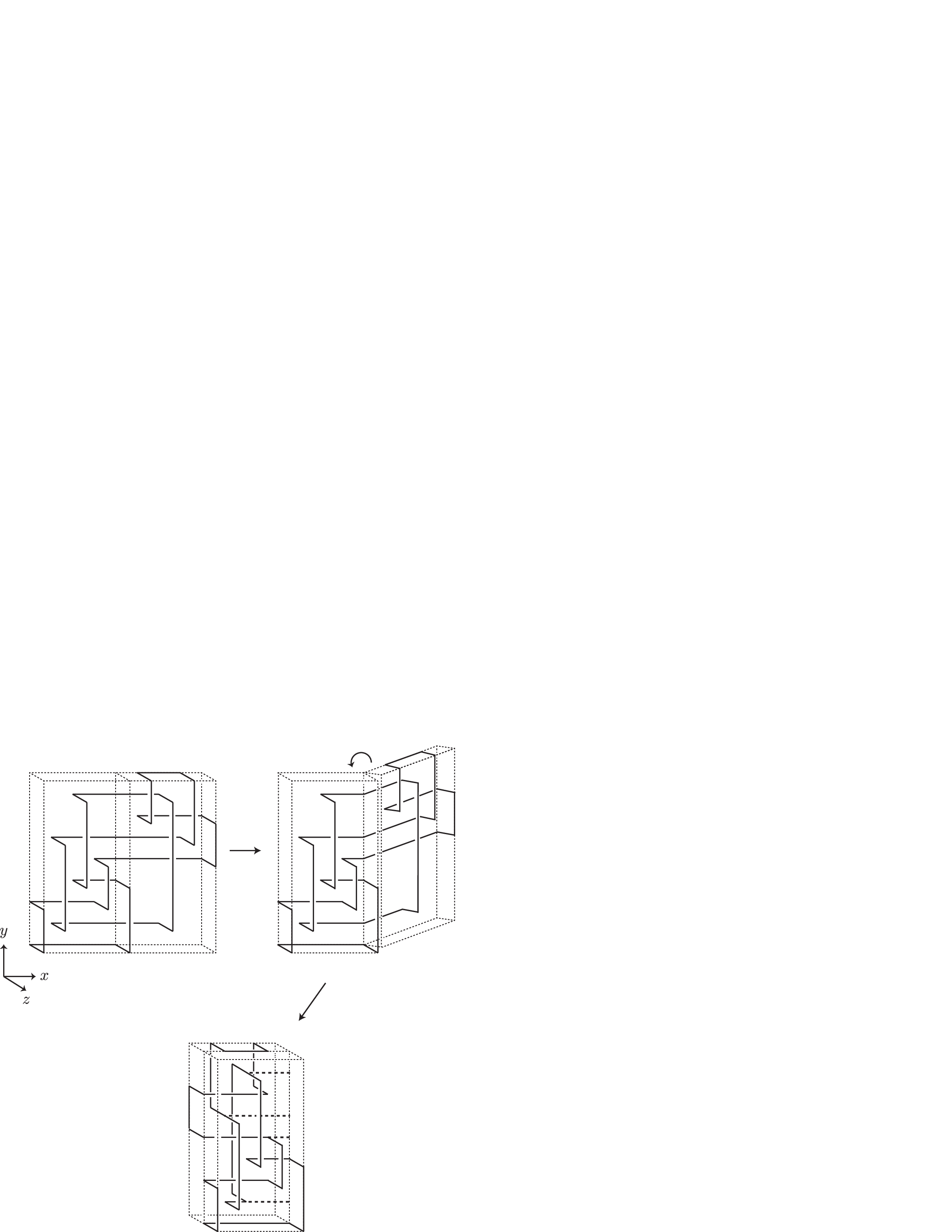}
\caption{Horizontal folding}
\label{fig3}
\end{figure}

Again we count the number of edges.
Note that the numbers of $y$-edges and $z$-edges are unchanged.
If $g$ is odd, then each $x$-level $1, \cdots , \frac{g-1}{2}$ contains exactly two $y$-sticks
and $x$-level $\frac{g+1}{2}$ contains one $y$-stick.
This means that there are four $x$-edges between $x$-levels 1 and 2,
at most eight $x$-edges between $x$-levels 2 and 3, and so on.
Similarly there are two $x$-edges between $x$-levels $\frac{g-1}{2}$ and $\frac{g+1}{2}$,
at most six $x$-edges between $x$-levels $\frac{g-3}{2}$ and $\frac{g-1}{2}$, and so on.
Keep counting until we reach the middle level.
Then the maximum number of $x$-edges is
$\sum^{\frac{g-1}{2}}_{n=1} 2 n = \frac{g^2-1}{4}$.
Moreover we can reduce two more $z$-edges in $x$-level $\frac{g+1}{2}$.
If $g$ is even, then each $x$-level $2, \cdots , \frac{g}{2}-1$ contains two $y$-sticks
and each $x$-level 1 and $\frac{g}{2}+1$ contains one $y$-stick.
Thus there are two $x$-edges each between $x$-levels 1 and 2,
and between $x$-levels $\frac{g}{2}$ and $\frac{g}{2}+1$,
at most six $x$-edges each between $x$-levels 2 and 3,
and between $x$-levels $\frac{g}{2}-1$ and $\frac{g}{2}$, and so on.
Then the maximum number of $x$-edges is
$2 \sum^{\frac{g}{4}}_{n=1} (4n-2) = \frac{g^2}{4}$ if $\frac{g}{2}+1$ is odd, or
$2 \sum^{\frac{g}{4}-\frac{1}{2}}_{n=1} (4n-2) + 4(\frac{g}{4} + \frac{1}{2}) -2 = \frac{g^2}{4} +1$
if $\frac{g}{2}+1$ is even.
Moreover we can reduce two $z$-edges each in $x$-levels 1 and $\frac{g}{2}+1$.

Therefore the maximum number of edges of this new lattice presentation is
$\frac{g^2-1}{4} + \frac{g^2-1}{2} + (2g-2) = \frac{3}{4}g^2  + 2g - \frac{11}{4}$ if $g$ is odd,
$\frac{g^2}{4} + \frac{g^2}{2} + (2g-4) = \frac{3}{4}g^2  + 2g - 4$
if $g=4k$ for some positive integer $k$, or
$(\frac{g^2}{4}+1) + \frac{g^2}{2} + (2g-4) = \frac{3}{4}g^2  + 2g - 3$
if $g=4k+2$.
So we have the following inequality:

\vspace{2mm}
\hspace{5mm} $\mbox{Len}(K) \leq \frac{3}{4}g^2  + 2g - \frac{11}{4}.$

\vspace{3mm}
\noindent {\bf Step 3.}
{\em Vertical folding to reduce one eighth of the square growth.\/}
\vspace{2mm}

We fold again the new lattice presentation structure along the line $z=2$, $y=\frac{g+1}{2}$ if $g$ is odd, 
and $z=2$, $y=\frac{g}{2} +1$ if $g$ is even as shown in Figure \ref{fig4}.
When we fold, all $y$-sticks on $z$-level 2 stayed on the same $z$-level,
upper half of $x$-sticks on $z$-level 1 moved into $z$-level 3 and
the rest half stayed on $z$-level 1,
and upper half of $y$-sticks on $z$-level 0 moved into $z$-level 4 and
the rest half stayed on $z$-level 0.
During this folding argument, a stick does not pass another,
only some of $y$-sticks on $z$-level 2 are overlapped,
and some of $y$-sticks on $z$-level 0 are broken into two pieces.
To get a proper lattice presentation, remove such all overlapped $y$-edges,
and connect each pair of the broken $y$-sticks by two $y$-edges and four $z$-edges 
as shown in the last figure in Figure \ref{fig4}.
Still this movement does not change the knot type of $K$.

\begin{figure} [h]
\includegraphics[scale=1]{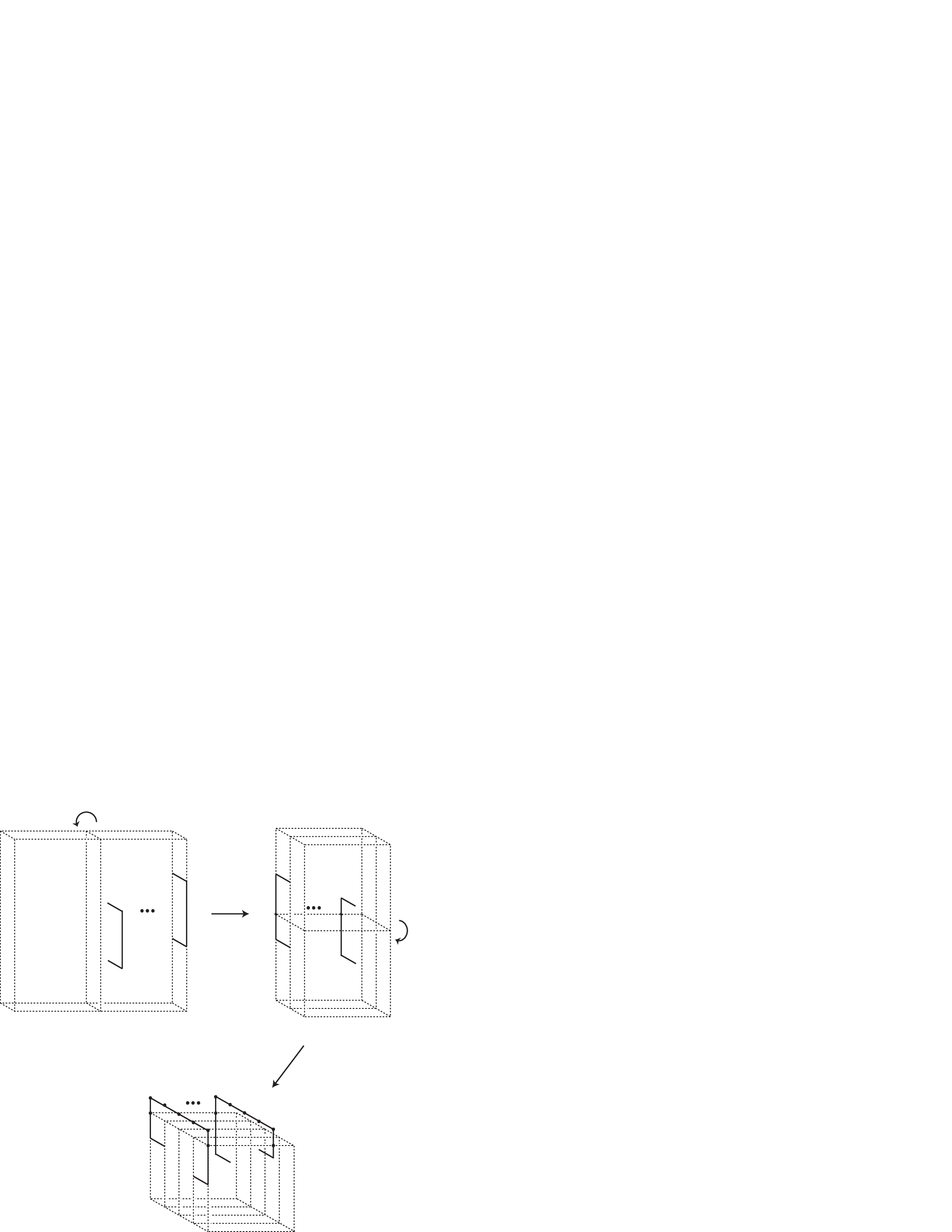}
\caption{Vertical folding}
\label{fig4}
\end{figure}

In this step, counting the number of edges is more complicated.
First consider the $y$-sticks on $z$-level 2 which are at least half among $g$ $y$-sticks.
Indeed, the exact number of such $y$-sticks is $\frac{g+1}{2}$ for odd $g$ case,
and $\frac{g}{2}+1$ for even $g$ case.
Folding these $y$-sticks is exactly in the same way as folding $x$-sticks in Step 2.
When we fold $x$-sticks in Step 2,
the maximum number of $x$-edges decreases by almost half as follows:
$\frac{g^2-1}{2}$ replaced by $\frac{g^2-1}{4}$ for odd $g$,
$\frac{g^2}{2}$ replaced by $\frac{g^2}{4}$ for $g=4k$, and
$\frac{g^2}{2}$ replaced by $\frac{g^2}{4}+1$ for $g=4k+2$.
So we can apply this calculation to the above half class of $y$-sticks on $z$-level 2.
More precisely the decrease by almost half is guaranteed 
when we fold all $y$-sticks as proceeded in Step 2.
But we can surely apply this argument to either 
the set of almost half $y$-sticks on the left side
or the set of almost half $y$-sticks on the right side in the first figure in Figure \ref{fig3}.  
We may assume that the above half class of $y$-sticks on $z$-level 2 fits this condition.

Note that for the other half class of $y$-sticks on $z$-levels 0 and 4,
we add two more $y$-edges and four more $z$-edges to connect each pair after broken.
Then the maximum number of $y$-edges is
$\frac{1}{2} (\frac{g^2-1}{4}) + \frac{1}{2} (\frac{g^2-1}{2}) + 2 (\frac{g-1}{2})
= \frac{3}{8}g^2 + g - \frac{11}{8}$ for odd $g$ case,
$\frac{1}{2} (\frac{g^2}{4}) + \frac{1}{2} (\frac{g^2}{2}) + 2 (\frac{g}{2}-1)
= \frac{3}{8}g^2 + g - 2$ for $g=4k$ case, and
$\frac{1}{2} (\frac{g^2}{4}+1) + \frac{1}{2} (\frac{g^2}{2}) + 2 (\frac{g}{2}-1)
= \frac{3}{8}g^2 + g - \frac{3}{2}$ for $g=4k+2$ case.

After counting the $z$-sticks used to connect each pair of the broken $y$-sticks,
the maximum number of $z$-edges in total is
$2g + 4 (\frac{g-1}{2}) = 4g - 2$ for odd $g$, and
$2g + 4 (\frac{g}{2}-1) = 4g - 4$ for even $g$,
while the number of $x$-edges are unchanged from Step 2.
Therefore the maximum number of edges of the final lattice presentation is
$\frac{g^2-1}{4} + (\frac{3}{8}g^2 + g - \frac{11}{8}) + (4g-2)
= \frac{5}{8}g^2  + 5g - \frac{29}{8}$ if $g$ is odd,
$\frac{g^2}{4} + (\frac{3}{8}g^2 + g - 2) + (4g-4)
= \frac{5}{8}g^2  + 5g - 6$
if $g=4k$, or
$(\frac{g^2}{4}+1) + (\frac{3}{8}g^2 + g - \frac{3}{2}) + (4g-4)
= \frac{5}{8}g^2  + 5g - \frac{9}{2}$
if $g=4k+2$.
We have the following inequality:

\vspace{2mm}
\hspace{5mm} $\mbox{Len}(K) \leq \frac{5}{8}g^2  + 5g - \frac{29}{8}.$

\vspace{3mm}
\noindent {\bf Step 4.}
{\em Apply Proposition \ref{prop:grid} to complete the proof of Theorem \ref{thm:Len}.\/}
\vspace{2mm}

Since we can take any upper bounds of Step 1, 2 and 3, we finally get:

\vspace{2mm}
\hspace{5mm} $\mbox{Len}(K) \leq \min \left\{\frac{3}{4}g^2  + 2g - \frac{11}{4}, \,
\frac{5}{8}g^2  + 5g - \frac{29}{8} \right\}.$
\vspace{2mm}

Now apply Proposition \ref{prop:grid} to complete the proof of Theorem \ref{thm:Len}.

\section{Upper bounds for the minimum ropelength}

In this section we prove Theorem \ref{thm:Rop}.
Since a lattice knot can be changed to a smooth knot
by taking a twice enlargement and replacing every corner with a suitable quartile circle
as illustrated in Figure \ref{fig5}.
Apply this argument to each steps in Section \ref{sec:Len}.
This means that we take twice the length and subtract $2 - \frac{\pi}{2}$ the number of corners times.

\begin{figure} [h]
\includegraphics[scale=1]{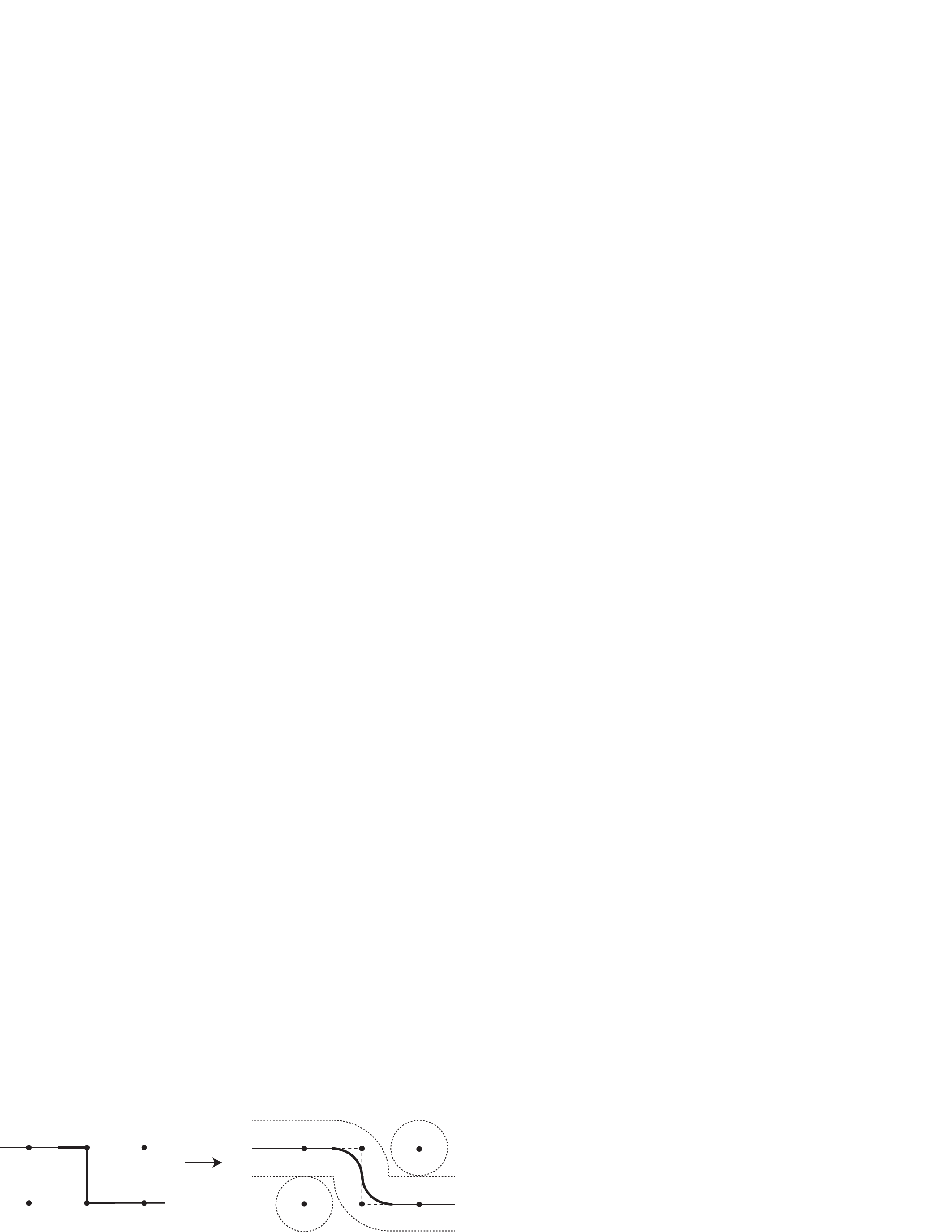}
\caption{Replacing every corner with a quartile circle}
\label{fig5}
\end{figure}

In Step 1, the number of corners is $4g$.
So we have the inequality
$\mbox{Rop}(K) \leq 2(g^2 + 2g) + 4g(\frac{\pi}{2} - 2) = 2g^2 + (2 \pi - 4)g.$

In Step 2, at least $2g$ corners most of which are placed on $z$-levels 0 and 2 are guaranteed.
Thus $\mbox{Rop}(K) \leq 2(\frac{3}{4}g^2  + 2g - \frac{11}{4}) + 2g(\frac{\pi}{2} - 2)
= \frac{3}{2}g^2  + \pi g - \frac{11}{2}.$

In Step 3, at least $g$ corners which are placed on $z$-levels 0 and 4 are guaranteed.
Thus $\mbox{Rop}(K) \leq 2(\frac{5}{8}g^2  + 5g - \frac{29}{8}) + g(\frac{\pi}{2} - 2)
= \frac{5}{4}g^2  + (\frac{\pi}{2} + 8) g - \frac{29}{4}.$

Therefore we finally get the following inequality:

\vspace{2mm}
\hspace{5mm} $\mbox{Rop}(K) \leq \min \left\{ \frac{3}{2}g^2  + \pi g - \frac{11}{2}, \,
\frac{5}{4}g^2  + (\frac{\pi}{2} + 8) g - \frac{29}{4} \right\}.$
\vspace{2mm}

Again apply Proposition \ref{prop:grid} to complete the proof of Theorem \ref{thm:Rop}.

\end{document}